\numberwithin{equation}{section}
\newtheorem{theorem}{Theorem}[section]
\newtheorem{lemma}[theorem]{Lemma}
\newtheorem{k-lemma}[theorem]{Key lemma}
\newtheorem {proposition}[theorem]{Proposition}
\theoremstyle{definition}
\theoremstyle{remark}
\newtheorem{remark}[theorem]{Remark}
\renewcommand{\O}{\mathbb O}  \renewcommand{\H}{\mathbb H} \newcommand{\C}{\mathbb C}    \newcommand{\R}{\mathbb R}
\begin{document}

\title{An integral transform connecting spherical analysis on harmonic NA groups to that of odd dimensional real hyperbolic spaces}
 \author{A. INTISSAR  $^\dagger$} 
\author{ M.V. OULD MOUSTAPHA $^\ast$} \email{mohamedvall.ouldmoustapha230@gmail.com}
\author{ Z. MOUHCINE $^\dagger$} \email{zakariyaemouhcine@gmail.com}
\address{$^\ast$Department of Mathematics, College of Science and Arts, University of Aljouf, El-Qurayat, Saudi Arabia }
\address{$^\dagger$,
	Laboratory of Analysis and Applications - URAC/03,
	Department of Mathematics, P.O. Box 1014,  Faculty of Sciences,
	Mohammed V University in Rabat, Morocco} 

\keywords{Harmonic $NA$ groups; hyperbolic spaces; Laplace-Beltrami operator; Resolvent kernel; Gauss Hypergeometric function; Jacobi equation; hyperbolic functions.}
	\subjclass[2010]{44A20, 33C05, 53C35}
	\dedicatory{In memory of Professeur Ahmed Intissar (1952-2017).}

\begin{abstract}
The main aim of the present paper is to establish an integral transform connecting spherical analysis on harmonic NA groups to that of odd dimensional real hyperbolic spaces. Moreover, certain interesting integral identities for the Gauss hypergeometric functions  have also been given.
\end{abstract}
\maketitle

\section{Introduction and main results}

Harmonic $N A$ groups and analysis theorem have been studied by several autors \cite[...]{Anker, Berndt, Damek-Ricci, Ricci}. Recall that, as Riemannian manifolds, these solvable Lie groups include all symmetric spaces of noncompact type and rank one, namely the hyperbolic space $H^{n}_{\mathbb F} \, (\mathbb F=\R, \C, \H)$ and $H^{2}_{\O}$, but that most of them are not symmetric, providing numerous counterexamples to the Lichnerowicz conjecture \cite{Lichnerowicz}.
Despite the lack of symmetry, spherical analysis i.e. the analysis of radial functions on these spaces is quite similar to the hyperbolic space case. We shall emphasize that spherical analysis is again a particular case of the Jacobi function analysis  \cite{Anker}.

Let $X=G/K=NA$ be a rank one symmetric space of non compact type where $G$ is a non compact rank one semi-simple group. $K$ is the maximal compact subgroup of $G$ whereas $N$ and $A=\R$ are respectively
the nilpotent and abelian parts that enter into the Iwasawa decomposition of $G=NAK$. Let $Z_X$ denote
the center of $N_X:=N$. $N$ is abelian for real hyperbolic spaces $G/K=H^{n}_{\R}$ and of Heisenberg type (see \cite{Anker, Kaplan, Kaplan2, Korany1, Rouviere}  for the general theory on Heisenberg type groups) in the other cases  for $G/K= H^{n}_{\C},  H^{n}_{\H}$ or $ H^{n}_{\O}$. 

The space $X=G/K=NA$ is a homogeneous  Riemannian space and if $L_X$ denote the Laplace-Beltrami operator of $X=NA$, then the radial part of $L_X$  in geodesic polar coordinates is given by
\begin{equation}\label{1}
rad(L_X)=\frac{d^2}{dr^2}+\left\{ \dim N_X\coth (r) + \dim Z_X\tanh (r) \right\}\frac{d}{dr} , 
\end{equation}
where $r=d(x,y)$ is the geodesic distance between two points $x,y\in X=NA$. 
 
It is well known that the spherical resolvent kernel $R_X(\lambda;x,y):=R_X(\lambda;r)$ of the Laplacian $L_X$ on $X$ can be described as the singular solution at $r=0$ of the following equation of Jacobi type
\begin{equation}\label{2}
   \left(rad(L_X)+\sigma_X^2+\lambda^2\right)R_X(\lambda,r)=0, \quad  r>0, \, \lambda\in \C, 
\end{equation}
where $\sigma_X =:\sigma =\frac{dim N_X+dim Z_X}{2}$ and $\lambda$ is a complex number such that $\Im m \lambda\geq 0.$

In fact the resolvent kernel function of the shifted Laplacian $L_X+\sigma^2_X$ on $X$ is well known to be given in terms of the Gauss Hypergeometric functions $_2F_1(a,b;c;z)$ \cite{Anker, Wallach}. More precisely,  we have  
\begin{equation}\label{resol-X}
   R_X(\lambda,r)=C_X(\lambda)\cosh^{-\sigma+i\lambda}(r) \,\, _2F_1 \left(\frac{\sigma-i\lambda}{2},\frac{\sigma-i\lambda}{2}-\beta; 1-i\lambda; \cosh^{-2}(r)\right)
\end{equation}
for $\Im m\lambda \geq 0$, $\beta=\frac{dim Z_X-1}{2}$ and the constant $C_X(\lambda)$ is given explicitly by
\begin{equation*}\label{cste-X}
C_X(\lambda)= \pi^{-(dim N_X+1)/2}\frac{\Gamma((\sigma-i\lambda)/2) \Gamma((\sigma-i\lambda)/2-\beta)}{4\Gamma(1-i\lambda)},
\end{equation*}
where the hypergeometric function $_2F_1(a,b;c;z)$ reads as
\begin{equation*}
_2F_1(a,b;c;z) = {\Gamma(c)\over \Gamma(a)\Gamma(b)} \sum_{k=0}^\infty {\Gamma(a+k)\Gamma(b+k)\over k!\Gamma(c+k)}  \, z^k 
\end{equation*}
and where $\Gamma(z)=\int_{0}^{\infty} t^{z-1}e^{-t}dt$ is Euler's Gamma-function.
\begin{remark}
i) For instance if $Y$ is a real hyperbolic space the resolvent kernel of the Laplacian operator  $L_Y$ is given
as in (\ref{resol-hyp}) with $\beta=-1/2$ and $\sigma_Y= \dim N_Y/2=(\dim Y-1)/2$, in particular where $\dim Y$ is odd, the resolvent kernel $R_Y(\lambda,r)$ can be given in terms of elementary function, see Section 2.

ii) At this point one should observe that for $X=NA$ with $X$ not a real hyperbolic space, the number $\sigma_X=\frac{dim N_X+dim Z_X}{2}$ is an integer positive number. This occurs even if $X=NA$ is not a symmetric space but of Damek-Ricci spaces type \cite{Rouviere}, i.e, $X=NA$ is a Harmonic group of rank one and the corresponding resolvent kernel $R_X(\lambda,r)=R_{NA}(\lambda,r)$ is given, for example, as in \cite{Will}.
\end{remark}

An integral transform relating the heat kernels on even dimensional hyperbolic spaces to the ones of odd dimensional was obtained by Davies and Mandouvalos \cite{Davies}. More precisely, in \cite{Davies}, it has been established a recurrence relations relating  the heat kernel, written in terms of the hyperbolic distance, on the real hyperbolic spaces $H^{n+1}$ and $H^{n+2}$ of dimension $n+1$ and $n+2$ respectively,  given by the following  integral transform 
\begin{align}
K_{n+1}(t,\rho)= \frac{1}{\sqrt{2}} \int_{\rho}^{\infty} \frac{e^{(2n+1)t/4}}{ (\cosh \mu-\cosh\rho)^{1/2}} K_{n+2}(t,\mu) \sinh(\mu) d\mu.
\end{align}

The main objective of this paper is to show that all resolvent kernels of harmonic $NA$ groups  (including Riemannian symmetric space of non compact type and of rank one) can be expressed as an integral transform of those odd dimensional hyperbolic spaces. Namely the result to which is aimed this paper is to establish the following integral transform 
\begin{theorem}\label{integral transform} Let $X=NA$ be a harmonic group, including rank one symmetric spaces of non compact type
(Real hyperbolic spaces are disregarded). Then the resolvent kernel $R_X(\lambda,r)$ of the Laplacian $L_X$ as given in (\ref{resol-X}) can be expressed in terms of the resolvent kernel of odd dimensional hyperbolic spaces as follows
\begin{equation}\label{integraltransform}
R_X(\lambda,r)=\int_r^\infty W_X(r,\rho)R_Y(\lambda,\rho)\sinh(\rho) d\rho
\end{equation}
where the kernel $W_X(r,\rho)$ is independent of $\lambda\in\C$ and it is given by
\begin{equation}\label{kernel}
W_X(r,\rho) = \frac{2\pi^{\frac{\dim Z_X}{2}}}{\Gamma(\dim Z_X/2)} \cosh^{(1-\dim Z_X)}(r)\left(\cosh^2 \rho-\cosh^2 r\right)^{(\dim Z_X-2)/2}
\end{equation}
while
$R_Y(\lambda,\rho)$ is the resolvent kernel of some odd dimensional hyperbolic spaces $Y$ such that\\
$\dim Y= \dim N_X+ \dim Z_X+1$.
\end{theorem}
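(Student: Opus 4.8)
The plan is to insert the closed forms (\ref{resol-X}) for both resolvent kernels and reduce (\ref{integraltransform}) to a classical Euler--Beta integral for the Gauss hypergeometric function. Write $m=\dim Z_X$, $a=\frac{\sigma-i\lambda}{2}$ and $c=1-i\lambda$, so that $\beta=\frac{m-1}{2}$. Since $\dim Y=\dim N_X+m+1$, the space $Y$ has $\dim Z_Y=0$, whence $\beta_Y=-\tfrac12$ and $\sigma_Y=\frac{\dim N_Y}{2}=\frac{\dim N_X+m}{2}=\sigma$; by Remark (ii) $\sigma$ is a positive integer, so $\dim Y$ is indeed odd. Thus both kernels share the parameters $a,c$, namely $R_X=C_X(\lambda)\cosh^{-2a}(r)\,{}_2F_1(a,a-\beta;c;\cosh^{-2}r)$ and $R_Y=C_Y(\lambda)\cosh^{-2a}(\rho)\,{}_2F_1(a,a+\tfrac12;c;\cosh^{-2}\rho)$. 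First I would dispose of the $\pi$-powers, which balance independently of the integral: the factor $\pi^{m/2}$ in (\ref{kernel}) together with $C_Y(\lambda)\propto\pi^{-(\dim N_X+m+1)/2}$ reproduces exactly the power $\pi^{-(\dim N_X+1)/2}$ in $C_X(\lambda)$. After cancelling the remaining common factor $\Gamma(a)/\big(4\Gamma(c)\big)$, the theorem becomes the scalar identity
\begin{multline*}
\Gamma\!\big(a-\tfrac{m-1}{2}\big)\cosh^{-2a}(r)\,{}_2F_1\!\big(a,a-\tfrac{m-1}{2};c;\cosh^{-2}r\big)\\
=\frac{2\,\Gamma(a+\frac12)}{\Gamma(m/2)}\,\cosh^{1-m}(r)\int_r^\infty(\cosh^2\rho-\cosh^2 r)^{\frac{m-2}{2}}\cosh^{-2a}(\rho)\,{}_2F_1\!\big(a,a+\tfrac12;c;\cosh^{-2}\rho\big)\sinh\rho\,d\rho .
\end{multline*}

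Second, I would substitute $y=\cosh^2\rho$ (so that $\sinh\rho\,d\rho=\tfrac{dy}{2\sqrt y}$, which absorbs the factor $2$) and then $w=\cosh^2r/y$. A short computation shows that all powers of $\cosh^2 r$ collapse to the single factor $\cosh^{-2a}(r)$ on both sides, so that after cancelling it the claim reduces to the Euler-type integral
\begin{multline*}
\Gamma\!\big(a-\tfrac{m-1}{2}\big)\,{}_2F_1\!\big(a,a-\tfrac{m-1}{2};c;z\big)\\
=\frac{\Gamma(a+\frac12)}{\Gamma(m/2)}\int_0^1(1-w)^{\frac{m-2}{2}}w^{a-\frac{m}{2}-\frac12}\,{}_2F_1\!\big(a,a+\tfrac12;c;wz\big)\,dw,\qquad z=\cosh^{-2}r\in(0,1).
\end{multline*}
Geometrically this exhibits the transform as a Riemann--Liouville/Weyl fractional integral of order $m/2$ in the variable $\cosh^2$.

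Third, I would prove this last identity by expanding ${}_2F_1(a,a+\tfrac12;c;wz)=\sum_k\frac{(a)_k(a+\frac12)_k}{(c)_k\,k!}(wz)^k$, with $(\cdot)_k$ the Pochhammer symbol, and integrating term by term using the Beta integral $\int_0^1(1-w)^{m/2-1}w^{a-(m-1)/2+k-1}\,dw=\frac{\Gamma(m/2)\,\Gamma(a-(m-1)/2+k)}{\Gamma(a+\frac12+k)}$. The factors $\Gamma(a+\tfrac12)$ and $(a+\tfrac12)_k=\Gamma(a+\tfrac12+k)/\Gamma(a+\tfrac12)$ then cancel, and the identity $\Gamma(a-\tfrac{m-1}{2}+k)=\Gamma(a-\tfrac{m-1}{2})\,(a-\tfrac{m-1}{2})_k$ resums the series precisely into $\Gamma(a-\tfrac{m-1}2)\,{}_2F_1(a,a-\tfrac{m-1}{2};c;z)$, the left-hand side.

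Finally, I would check convergence and the interchange of summation and integration. Since $Z_X\subseteq N_X$ gives $\dim N_X\ge m$ and hence $\sigma=\frac{\dim N_X+m}{2}\ge m$, one has $\Re a=\tfrac12(\sigma+\Im\lambda)\ge m/2$ for $\Im\lambda\ge0$; this yields $\Re(a-\tfrac{m-1}{2})\ge\tfrac12>0$ (so each Beta integral converges at $w=1$) and $\Re a>\tfrac{m}{2}-\tfrac12$ (so the improper integral converges at $\rho=\infty$, i.e.\ $w=0$), while $z<1$ keeps $wz$ bounded away from the branch point of ${}_2F_1$, so the series converges uniformly in $w$ and term-by-term integration is legitimate by dominated convergence. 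I expect the genuine work to lie not in any single estimate but in the bookkeeping of the Gamma- and Pochhammer-factors through the two changes of variables: the whole theorem hinges on the fact that the weight $(\cosh^2\rho-\cosh^2 r)^{(m-2)/2}$ sends the hyperbolic parameter $\beta_Y=-\tfrac12$ to the harmonic parameter $\beta=\tfrac{m-1}{2}$, and that this shift is matched exactly by the ratio $\Gamma(a-\tfrac{m-1}{2})/\Gamma(a+\tfrac12)$ absorbed into $C_X(\lambda)$.
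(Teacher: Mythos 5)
Your proof is correct, but it takes a genuinely shorter route than the paper's. The paper proves the theorem by invoking its Key lemma --- the two-parameter shift identity (\ref{key lemma}), itself obtained by applying Lemma \ref{lemma2} twice and combining the two fractional integrals via Fubini's theorem and Euler's integral representation --- with $\mu=\tfrac12$, $\nu=\beta$, and then observing that the resulting kernel ${}_2F_1\left(\tfrac12,\beta+\tfrac12;\beta+\tfrac12;1-y/x\right)$ degenerates to the elementary power $(x/y)^{1/2}$, which is what produces the $\lambda$-independent weight $W_X$. You instead exploit the symmetry of ${}_2F_1$ in its first two parameters: since the harmonic kernel has parameters $(a,a-\beta)$ and the hyperbolic one $(a,a+\tfrac12)$, only the \emph{second} parameter needs to be raised, by $\beta+\tfrac12=m/2$, so a single one-parameter Euler (Riemann--Liouville type) integral suffices; indeed, your final Beta-integral identity is precisely Lemma \ref{lemma2} (with $b=a-\tfrac{m-1}{2}$ and shift $\mu=m/2$) after the substitution $y=x/w$, and you prove it the same way the paper proves that lemma, by term-by-term integration. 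Your route explains structurally why the paper's kernel collapses to an elementary function --- the coincidence of the second and third parameters in the kernel's ${}_2F_1$ is exactly the statement that the double shift $(\tfrac12,\beta)$ is secretly the single shift $m/2$ --- and your treatment of constants and convergence is more careful than the paper's. What the paper's Key lemma buys in exchange is generality: arbitrary independent shifts $\mu,\nu$ of both parameters, the natural tool when the first parameters do not match, as in the vector-bundle kernels alluded to in Section 5. One small slip: your parenthetical attribution of endpoints is swapped --- the condition $\Re\left(a-\tfrac{m-1}{2}\right)>0$ governs convergence at $w=0$ (equivalently $\rho=\infty$), while convergence at $w=1$ (equivalently $\rho=r$) needs $m=\dim Z_X\geq 1$, which holds because $X$ is not real hyperbolic; both conditions are satisfied, so nothing in your argument is affected.
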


The plan of the article is as follows. Inspired by the work of  Davies-Mandouvalos  \cite{Davies}, in Section 2, we have obtained a recurrence relation relating the real hyperbolic resolvent kernels for different dimensions. The Section 3 deals with some integral identities for the hypergeometric functions $_2F_1$.
In the Section 4, we establish an integral transform linking spherical analysis on harmonic NA groups to that of odd dimensional real hyperbolic spaces. Finally, in Section 5, some concluding remarks are made.

\section{Spherical analysis  on the hyperbolic space}

Spherical analysis on hyperbolic spaces was developed in \cite[...]{Davies, Helgason, Koornwinder}. We recall briefly some of it in this section. 
The radial part (in geodesic polar coordinates) of the Laplace-Beltrami operator $L_Y$ on a hyperbolic space $Y$ of dimension $n$ reads as
\begin{align}\label{Lap-hyp}
rad(L_Y) = \frac{d^2}{dr^2}+ (n-1) \coth (r) \frac{d}{dr}, \qquad r>0.
\end{align}

Let $\lambda\in \C$, the spherical functions $\varphi_{Y}(\lambda,.)$ on $Y$ are normalized radial eigenfunctions of $L_Y$ with eigenvalue $\nu=-(\sigma_{Y}^2 + \lambda^2)$ and $\varphi_{Y}(\lambda,0)= 1$. We then have, that $\varphi_{Y}(\lambda,.)$ is the solution of the following differential equation
\begin{align}\label{hyp-equa}
\left( \frac{d^2}{dr^2}+ (n-1) \coth (r) \frac{d}{dr} + ((n-1)/2)^2 + \lambda^2 \right) \varphi_{Y}(\lambda,r) = 0; \quad r>0, \lambda \in \C,
\end{align}
continuous at $r = 0$, and since $\lim_{r\rightarrow 0^+} r \coth(r) = 1$, this equation has a regular singular point at $r =0$. 
This is a Jacobi equation with parameters $\lambda, \alpha = \frac{n-2}{2}$ and $\beta=\frac{-1}{2}$ (for more
group theoretic interpretations of Jacobi functions see Koornwinder's paper \cite{Koornwinder}). Therefore, the spherical functions $\varphi_{Y}(\lambda,r)$ are given by Jacobi functions in the following way 
\begin{align}
\varphi_{Y}(\lambda,r) &= \phi_{\lambda}^{(\frac{n-2}{2},\frac{-1}{2})}(r). 
\end{align}
Equivalently, in terms of the Gauss hypergeometric functions, we have 
\begin{equation}\label{reg-sol}
\varphi_{Y}(\lambda,r) = \, _2F_1\left(\frac{1}{2}\left((n-1)/2-i\lambda\right),\frac{1}{2}\left((n-1)/2+i\lambda\right);\frac{n}{2}; \, -\sinh^{2}(r)\right).
\end{equation}
It can also be seen \cite[p. 7]{Koornwinder} that for $\lambda\neq -i, -2i,...$, a second solution of (\ref{hyp-equa}) on $(0,+\infty)$ is given by
\begin{align}\label{sing-sol}
\varphi_{Y}(\lambda,r) \, =&\, (\cosh r)^{i\lambda-(n-1)/2} \, \times \nonumber\\
&\,  _2F_1 \left(\frac{1}{2}\left((n-1)/2-i\lambda\right),\frac{1}{2}\left((n-1)/2-i\lambda\right)+\frac{1}{2}; 1-i\lambda; \cosh^{-2}(r)\right).
\end{align}
It known that the resolvent kernel $R_Y(\lambda,r)$  is a multiple of the fundamental solution at infinity (\ref{sing-sol}) of (\ref{hyp-equa}) which reads as \cite{Agmon}
\begin{equation}\label{resol-hyp}
   R_Y(\lambda,r)= C_{n,\lambda} \left(\cosh r/2\right)^{2i\lambda-(n-1)} \, _2F_1 \left((n-1)/2-i\lambda, 1/2 -i\lambda; 1-2i\lambda; \cosh^{-2}(r/2)\right)
\end{equation}
where 
\begin{align}
C_{n,\lambda} = 2^{-(n-2i\lambda)} \pi^{-(n-1)/2} \Gamma((n-1)/2-i\lambda) / \Gamma(1-i\lambda). 
\end{align} 

In \cite{Davies}, it has been established a recurrence relations relating  the heat kernels $K(t,.)$ on the real hyperbolic spaces $H^{n-1}$ and $H^{n+1}$ of dimension $n-1$ and $n+1$ respectively,  given by the following  recurrence relation
\begin{align}
K_{n+1}(t,\rho) = -\frac{e^{(1-n)t}}{2\pi \sinh \rho} \, \frac{\partial}{\partial \rho} K_{n-1}(t,\rho), \quad \rho>0.
\end{align}
In what follows, we give an analogous of this result relating the real hyperbolic resolvent kernels on the real hyperbolic spaces of dimension $n$ and $n+2$ respectively. Then,  we prove that for odd dimensional real hyperbolic spaces the resolvent kernel is given in terms of elementary functions. Namely, our main result of this section is the following:
\begin{proposition}
Let $R_{n}(\lambda,r)$ be the resolvent kernel for real hyperbolic spaces $H^n$ of dimension $n$. Then

(i) The resolvent kernel in (\ref{resol-hyp}) can be written as 
\begin{align}\label{resol-hyp2}
   R_n(\lambda,r)= &\, C_n(\lambda) \left(\cosh r\right)^{i\lambda-(n-1)/2}\,  \times\nonumber\\ 
   & \, _2F_1 \left(\frac{1}{2}\left((n-1)/2-i\lambda\right),\frac{1}{2}\left((n-1)/2-i\lambda\right)+\frac{1}{2}; 1-i\lambda; \cosh^{-2}( r)\right)
\end{align}
where 
\begin{align}
C_n(\lambda) = (4\pi^{n/2})^{-1} \, \frac{\Gamma\left(\frac{1}{2}((n-1)/2-i\lambda)\right)\Gamma\left(\frac{1}{2}((n-1)/2-i\lambda)+1/2\right)}{\Gamma(1-i\lambda)}. 
\end{align}

(ii) The following recurrence relation hold: 
\begin{align}
\frac{-1}{2\pi \sinh r}\frac{\partial}{\partial r} \Bigr[R_n(\lambda,r)\Bigr] = R_{n+2}(\lambda,r).
\end{align}

(iii) The resolvent kernel for odd dimensional real hyperbolic space of dimension $2m + 1$ can be given in terms of elementary functions as follows
\begin{equation}\label{elem-resol}
   R_{2m+1}(\lambda,r) = C_m(\lambda) \,  \left(\frac{1}{\sinh r}\frac{\partial}{\partial r}\right)^m \left(e^{i r \lambda}\right),
\end{equation}
where \,  $C_m(\lambda) =(-1)^{m+1}/2i\lambda (2\pi)^m$.
\end{proposition}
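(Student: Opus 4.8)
The plan is to prove the three parts in order, treating (ii) as the engine that yields (iii). Throughout write $a_n=\tfrac12\big((n-1)/2-i\lambda\big)$, so that the two upper parameters in (\ref{resol-hyp2}) are $a_n$ and $a_n+\tfrac12$ and the lower one is $1-i\lambda$.

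For part (i), the decisive observation is that the hypergeometric function in (\ref{resol-hyp}) has its parameters in the relation $c=2b$: with $a=(n-1)/2-i\lambda$, $b=1/2-i\lambda$ one has $c=1-2i\lambda=2b$. This is exactly the case governed by the quadratic transformation
\[
{}_2F_1(a,b;2b;z)=\left(1-\tfrac{z}{2}\right)^{-a}\,{}_2F_1\!\left(\tfrac{a}{2},\tfrac{a+1}{2};b+\tfrac12;\Big(\tfrac{z}{2-z}\Big)^2\right).
\]
Putting $z=\cosh^{-2}(r/2)$ and using $\cosh^2(r/2)=(\cosh r+1)/2$ gives $z/(2-z)=\cosh^{-1}r$, so the new argument is $\cosh^{-2}r$ and the new parameters $\tfrac{a}{2},\tfrac{a+1}{2},b+\tfrac12$ are precisely $a_n,a_n+\tfrac12,1-i\lambda$. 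The two algebraic prefactors combine cleanly: $(\cosh(r/2))^{2i\lambda-(n-1)}(1-z/2)^{-a}=(\cosh r/2)^{-a}=2^{(n-1)/2-i\lambda}(\cosh r)^{i\lambda-(n-1)/2}$. It then remains to check $C_n(\lambda)=2^{(n-1)/2-i\lambda}C_{n,\lambda}$, which follows from the Legendre duplication formula $\Gamma\big((n-1)/2-i\lambda\big)=\pi^{-1/2}2^{(n-3)/2-i\lambda}\Gamma(a_n)\Gamma(a_n+\tfrac12)$ once the powers of $2$ and $\pi$ are collected.

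For part (ii), the natural device is the substitution $u=\cosh r$, under which $\tfrac{1}{\sinh r}\tfrac{\partial}{\partial r}=\tfrac{\partial}{\partial u}$ and the operator in (\ref{Lap-hyp}) becomes $(u^2-1)\partial_u^2+nu\,\partial_u$. Hence $R_n$ solves $\big[(u^2-1)\partial_u^2+nu\,\partial_u+\sigma_n^2+\lambda^2\big]R_n=0$ with $\sigma_n=(n-1)/2$. Differentiating this equation once in $u$ and writing $g=\partial_uR_n$ produces $\big[(u^2-1)\partial_u^2+(n+2)u\,\partial_u+(n+\sigma_n^2)+\lambda^2\big]g=0$; since $\sigma_n^2+n=(n+1)^2/4=\sigma_{n+2}^2$, this is exactly the dimension-$(n+2)$ equation. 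Both $\partial_uR_n$ and $R_{n+2}$ are the solution subdominant (decaying) at infinity, hence proportional; comparing leading terms as $u\to\infty$, where $R_n\sim C_n(\lambda)u^{i\lambda-(n-1)/2}$, and using $C_{n+2}(\lambda)=\pi^{-1}a_nC_n(\lambda)$ together with $i\lambda-(n-1)/2=-2a_n$ fixes the constant as $-2\pi$, giving the stated recurrence.

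For part (iii), I would simply iterate (ii): applying it $m$ times gives $R_{2m+1}(\lambda,r)=(-1/2\pi)^m\big(\tfrac{1}{\sinh r}\partial_r\big)^mR_1(\lambda,r)$, reducing everything to the base case $n=1$. Setting $n=1$ in (\ref{resol-hyp2}) leaves the hypergeometric factor ${}_2F_1(-\tfrac{i\lambda}{2},-\tfrac{i\lambda}{2}+\tfrac12;1-i\lambda;\cosh^{-2}r)$, which is of the form ${}_2F_1(a,a+\tfrac12;2a+1;z)$ with $a=-i\lambda/2$ and therefore sums to $\big((1+\sqrt{1-z})/2\big)^{-2a}$. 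With $z=\cosh^{-2}r$ one has $\sqrt{1-z}=\tanh r$ and $(1+\tanh r)/2=e^{r}/(2\cosh r)$, so the powers of $\cosh r$ cancel and $R_1(\lambda,r)=C_1(\lambda)2^{-i\lambda}e^{i\lambda r}$; the duplication formula reduces $C_1(\lambda)2^{-i\lambda}$ to $-1/(2i\lambda)$, i.e. $R_1(\lambda,r)=-\tfrac{1}{2i\lambda}e^{i\lambda r}$. Substituting this into the iterated formula and collecting constants yields $C_m(\lambda)=(-1)^{m+1}/\big(2i\lambda(2\pi)^m\big)$.

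The routine part is the gamma- and power-of-$2$ bookkeeping in (i) and (iii); the two genuinely delicate points are choosing the correct quadratic transformation in (i), and, in (ii), justifying that $\partial_uR_n$ is a scalar multiple of $R_{n+2}$ — this rests on characterizing each as the unique (up to scalar) solution decaying at $r=\infty$, after which the scalar is read off from the leading asymptotics. The base case $n=1$ in (iii) is mildly degenerate since $\sigma_1=0$, but the closed-form evaluation above handles it directly.
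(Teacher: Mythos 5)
Your proof is correct, and for parts (i) and (iii) it is essentially the paper's own argument: your quadratic transformation ${}_2F_1(a,b;2b;z)=(1-z/2)^{-a}\,{}_2F_1\bigl(a/2,(a+1)/2;b+\tfrac12;(z/(2-z))^2\bigr)$ is the inverse form of the Magnus identity the paper invokes, followed by the same Legendre-duplication bookkeeping, and your (iii) reduces to $R_1(\lambda,r)=-e^{i\lambda r}/(2i\lambda)$ by the same elementary evaluation of ${}_2F_1(a,a+\tfrac12;2a+1;\cdot)$. The genuine difference is in (ii). The paper proves the recurrence purely computationally: in the half-argument representation (\ref{resol-hyp1}) only the parameter $a=(n-1)/2-i\lambda$ and the accompanying power of $z=\cosh^{-2}(r/2)$ depend on $n$, so the derivative formula $\partial_z\bigl[z^{a}\,{}_2F_1(a,b;c;z)\bigr]=a\,z^{a-1}\,{}_2F_1(a+1,b;c;z)$, together with $\frac{-1}{2\pi\sinh r}\partial_r=\frac{z^{2}}{4\pi}\partial_z$ and $C_{n+2,\lambda}=\frac{a}{4\pi}C_{n,\lambda}$, gives (ii) in one line. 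You instead argue structurally: in the variable $u=\cosh r$ the operator $\partial_u=\frac{1}{\sinh r}\partial_r$ intertwines the radial equations in dimensions $n$ and $n+2$ (because $\sigma_n^2+n=\sigma_{n+2}^2$), and the constant $-2\pi$ is fixed by comparing leading asymptotics via $C_{n+2}(\lambda)=\pi^{-1}a_nC_n(\lambda)$. Your route explains \emph{why} such a recurrence exists and needs no explicit hypergeometric representation (it would apply equally to heat kernels, as in Davies--Mandouvalos), whereas the paper's is shorter but formula-bound. One step of yours needs tightening: for $0\le\Im\lambda<\sigma_{n+2}$ \emph{both} solutions at infinity, behaving like $u^{i\lambda-\sigma_{n+2}}$ and $u^{-i\lambda-\sigma_{n+2}}$, decay, so ``decaying at infinity'' does not by itself single out a one-dimensional solution space; you should either assume $\Im\lambda>\sigma_{n+2}$ first (where uniqueness of the decaying solution is genuine) and then extend the identity by analyticity in $\lambda$, or characterize $\partial_uR_n$ and $R_{n+2}$ by the precise asymptotics $\sim\mathrm{const}\cdot u^{i\lambda-\sigma_{n+2}}$ — which the expansion you already wrote down in fact supplies.
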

\begin{proof}

Making use of the identity \cite[p. 50]{Magnus} 
\begin{align*}
  _2F_1\left(2a,c-1/2;2c-1;2 \sqrt z/(1+\sqrt z)\right) = (1+\sqrt z)^{2a} \, _2F_1(a,a+1/2;c; z)
\end{align*} 
and Legendre's duplication formula
\begin{align*}
\Gamma(2a)=(1/\sqrt \pi) \Gamma(a)\Gamma(a+1/2)2^{2a-1}
\end{align*} 
for $z=\cosh^{-2}(r)$, $a=((n-1)/2-i\lambda)/2$ and $c=1-i\lambda$, the resolvent kernel in (\ref{resol-hyp}) becomes
\begin{align*}
   R_n(\lambda,r)= &\, C_n(\lambda) \cosh^{i\lambda-(n-1)/2}(r) \, \times \nonumber\\ 
   &\, _2F_1 \left(\frac{1}{2}\left((n-1)/2-i\lambda\right),\frac{1}{2}\left((n-1)/2-i\lambda\right)+\frac{1}{2}; 1-i\lambda; \cosh^{-2}(r)\right)
\end{align*}
where 
\begin{align*}
C_n(\lambda) = (4\pi^{n/2})^{-1} \, \frac{\Gamma\left(\frac{1}{2}((n-1)/2-i\lambda)\right)\Gamma\left(\frac{1}{2}((n-1)/2-i\lambda)+1/2\right)}{\Gamma(1-i\lambda)}. 
\end{align*}
This proves the relation (i). The relation (ii) is obtained as follows. Recall that the real hyperbolic resolvent kernel of dimension $n$ is given in term of the hypergeometric function $_2F_1$ by \cite{Agmon} 
\begin{equation}\label{resol-hyp1}
   R_n(\lambda,r)= C_{n,\lambda} \,(\cosh r/2)^{2i\lambda-(n-1)} \, _2F_1 \left((n-1)/2-i\lambda, 1/2 -i\lambda; 1-2i\lambda; \cosh^{-2}(r/2)\right)
\end{equation}
where 
\begin{align*}
C_{n,\lambda} = 2^{-(n-2i\lambda)} \pi^{-(n-1)/2} \Gamma((n-1)/2-i\lambda) / \Gamma(1-i\lambda). 
\end{align*} 
By using the differential formula of the hypergeometric function $_2F_1$ \cite[p. 557]{Abramowitz}
\begin{align*}
\frac{\partial^k}{\partial z^k} \Bigr[z^{a+k-1}\,  _2F_1(a,b;c;z)\Bigr] = (a)_k \, z^{a-1}\,   _2F_1(a+k,b;c;z),
\end{align*}
where $(a)_k = a(a + 1) \dots (a + k -1)$ is the Pochhammer symbol, for $k=1$, we prove that the resolvent kernel $R_n(\lambda,r)$ in (\ref{resol-hyp1}) verifies the recurrence relation 
\begin{align}\label{recurrence-rel}
\frac{-1}{2\pi \sinh r}\frac{\partial}{\partial r} \Bigr[R_n(\lambda,r)\Bigr] = R_{n+2}(\lambda,r).
\end{align}
Finally, the relation (iii) is obtained easily from (ii). In fact, thanks to the recurrence formula (\ref{recurrence-rel}), we obtain the following expression for $R_{n}(\lambda,r)$ when $n$ is odd:
\begin{align}\label{rec-rel}
R_{2m+1}(\lambda,r) = \left(\frac{-1}{2\pi \sinh r}\frac{\partial}{\partial r}\right)^m \left[R_1(\lambda,r)\right] ,
\end{align}
where $R_{1}(\lambda,r)$ is the the resolvent kernel on the real hyperbolic space of one dimensional $H^1$. 
In addition, we observe that, for $n = 1$, the expression of the resolvent kernel $R_{n}(\lambda,r)$ given in (\ref{resol-hyp1}) reduces to 
\begin{equation*}
   R_1(\lambda,r)=  C_{1,\lambda} \left(\cosh r/2\right)^{2i\lambda} \, _2F_1 \left(-i\lambda, 1/2 -i\lambda; 1-2i\lambda; \cosh^{-2}(r/2)\right)   
\end{equation*}
where 
\begin{align*}
C_{1,\lambda} = 2^{2i\lambda-1}  \Gamma(-i\lambda) / \Gamma(1-i\lambda). 
\end{align*} 
Further, we use the well-known elementary expression of the hypergeometric function  given by \cite[p. 38]{Magnus}
\begin{equation*}
 _2F_1 \left(a, 1/2 +a; 2a+1; z^{2}\right) = 2^{2a} \left(1+\sqrt{1-z^2}\right)^{-2a} \,
\end{equation*}
for $a=-i\lambda$ and $z=1/\cosh(r/2)$, we arrive to the following identity 
\begin{equation*}
   R_{1}(\lambda,r) = \frac{-1}{2i\lambda} e^{i r \lambda}.
\end{equation*}
Replacing $R_{1}(\lambda,r)$ by its expression in the recurrence formula (\ref{rec-rel}), we obtain the announced relation iii). This ends the proof.
\end{proof}

\section{Gauss Hypergeometric functions  $_2F_1$}

The two expressions of the resolvent kernels in (\ref{resol-X}) and in (\ref{resol-hyp2}) are given in terms of the hypergeometric function $_2F_1$. For this reason, to find a proof of Theorem \ref{integral transform}, we should say something about hypergeometric functions.

\begin{equation}\label{F21}
_2F_1(a,b;c;z) = {\Gamma(c)\over \Gamma(a)\Gamma(b)} \sum_{k=0}^\infty {\Gamma(a+k)\Gamma(b+k)\over k!\Gamma(c+k)}  \, z^k,
\end{equation}
where $\Gamma(z)=\int_{0}^{\infty} t^{z-1}e^{-t}dt$ is Euler's Gamma-function. See \cite{Abramowitz}, \cite{Magnus} or \cite{Slater} for a general discussion of $_2F_1$'s and of more general series of this type.

In the following we prove some integral representation, connecting two hypergeometric functions. The corresponding kernel  function can be seen as the kernel of the Fourier-Jacobi or Olevskii index transform studied, for instance, in the book of S. B. Yakubovich \cite{Yakubovich}.

The most important properties of hypergeometric functions that we use all follow from the following integral
\begin{lemma}\label{lemma2}
For $ x > 1$ , $\Re \mu > 0$\ and $\Re b > 0$, we have
\begin{equation}\label{formule1} 
    x^{-b}\, _2F_1(a,b;c;x^{-1})={\Gamma(b+\mu)\over \Gamma(b)\Gamma(\mu)}\int_x^{\infty}y^{-b-\mu}(y-x)^{\mu-
1}\, _2F_1(a,b+\mu;c;y^{-1}) \,dy.
\end{equation}
\end{lemma}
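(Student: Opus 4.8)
The plan is to establish (\ref{formule1}) by inserting the power-series expansion of the hypergeometric function appearing on the right, interchanging summation and integration, and recognising each resulting integral as a Beta function. First I would write
\[
_2F_1(a,b+\mu;c;y^{-1}) = \sum_{k=0}^{\infty} \frac{(a)_k\,(b+\mu)_k}{(c)_k\,k!}\, y^{-k},
\]
with $(a)_k=\Gamma(a+k)/\Gamma(a)$ the Pochhammer symbol, and substitute it into the integral on the right-hand side of (\ref{formule1}). Since $x>1$, this series converges uniformly for $y\in[x,\infty)$, and each term is absolutely integrable on $[x,\infty)$: the factor $y^{-b-\mu-k}$ ensures decay at infinity, while $(y-x)^{\mu-1}$ is integrable at the endpoint $y=x$ precisely because $\Re\mu>0$. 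Thus Fubini's theorem allows me to integrate term by term.

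This reduces the matter to evaluating, for each $k\ge 0$, the elementary integral
\[
I_k := \int_x^{\infty} y^{-b-\mu-k}\,(y-x)^{\mu-1}\,dy.
\]
I would compute $I_k$ through the substitution $y=x/s$ with $s\in(0,1)$, under which $y-x=x(1-s)/s$ and $dy=-x\,s^{-2}\,ds$. Collecting the powers of $x$ and of $s$, the integral turns into a standard Beta integral,
\[
I_k = x^{-b-k}\int_0^1 s^{\,b+k-1}(1-s)^{\mu-1}\,ds = x^{-b-k}\,\frac{\Gamma(b+k)\,\Gamma(\mu)}{\Gamma(b+\mu+k)},
\]
where the Beta evaluation is legitimate exactly under the hypotheses $\Re(b+k)>0$ (guaranteed by $\Re b>0$) and $\Re\mu>0$.

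The remaining step is purely algebraic. Substituting $I_k$ back and using $(b+\mu)_k=\Gamma(b+\mu+k)/\Gamma(b+\mu)$ together with $\Gamma(b+k)=\Gamma(b)\,(b)_k$, the factors $\Gamma(b+\mu+k)$, $\Gamma(b+\mu)$ and $\Gamma(\mu)$ cancel against the prefactor $\Gamma(b+\mu)/(\Gamma(b)\Gamma(\mu))$, leaving
\[
\frac{\Gamma(b+\mu)}{\Gamma(b)\Gamma(\mu)}\sum_{k=0}^{\infty}\frac{(a)_k\,(b+\mu)_k}{(c)_k\,k!}\,I_k = x^{-b}\sum_{k=0}^{\infty}\frac{(a)_k\,(b)_k}{(c)_k\,k!}\,x^{-k} = x^{-b}\,_2F_1(a,b;c;x^{-1}),
\]
which is exactly the left-hand side of (\ref{formule1}). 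I do not anticipate a genuine obstacle; the only point that deserves care is the justification of the term-by-term integration. This is handled by noting that the dominating series obtained after replacing every term by its modulus and using the value of $I_k$ just found converges by the ratio test, its limiting ratio of consecutive terms being $1/x<1$ since $x>1$.
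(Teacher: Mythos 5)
Your proposal is correct and follows essentially the same route as the paper's own proof: expand $_2F_1(a,b+\mu;c;y^{-1})$ in its power series, integrate term by term, and evaluate each integral as a Beta function via the substitution $y=x/s$ (the paper writes it as $y=xt^{-1}$). Your treatment is in fact somewhat more careful than the paper's, since you explicitly justify the interchange of sum and integral with a dominating-series/ratio-test argument where the paper only remarks that the series converges absolutely.
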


\begin{proof} 
We use the well known series expansion for hypergeometric function: 
\begin{equation}\label{serieexp}
_2F_1(a,b+\mu;c;1/y) = {\Gamma(c)\over \Gamma(a)\Gamma(b+\mu)}\sum_{k=0}^\infty
{\Gamma(a+k)\Gamma(b+\mu+k)\over k!\Gamma(c+k)}1/y^k .
\end{equation}
Note that this series converges absolutely because of the above assumptions. Inserting $(\ref{serieexp})$ in the right hand side of the formula \eqref{formule1} and integrating term by term making the change of
variable $ y = xt^{-1}$ and use the Euler's formula for the beta function 
\begin{align*}
\int_{0}^{1} t^{a-1} (1-t)^{b-1} dt = \frac{\Gamma(a)\Gamma(b)}{\Gamma(a+b)},
\end{align*}
we obtain at once the desired result.
\end{proof}
Another integrals connecting hypergeometric functions with different parameters are the following
\begin{lemma}\label{lemma3}
For $ x > 1$ , $\Re \nu > 0$ and $\Re c > \Re \nu $, we have
\begin{equation*}
    x^{c-\nu}\, _2F_1(a,b;c;x^{-1})={\Gamma(c)\over \Gamma(c-\nu)\Gamma(\nu)}\int_x^{\infty}y^{-c}(y-x)^{\nu-
1}\, _2F_1(a,b;c-\nu;y^{-1}) \,dy.
\end{equation*}
\end{lemma}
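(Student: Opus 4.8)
The plan is to mirror the proof of Lemma~\ref{lemma2}, the only structural difference being that here it is the lower parameter $c$ that is shifted rather than the parameter $b$. First I would insert the absolutely convergent series expansion
\[
{}_2F_1(a,b;c-\nu;y^{-1}) = \frac{\Gamma(c-\nu)}{\Gamma(a)\Gamma(b)}\sum_{k=0}^{\infty}\frac{\Gamma(a+k)\Gamma(b+k)}{k!\,\Gamma(c-\nu+k)}\,y^{-k}
\]
into the right-hand side. Since $y\ge x>1$, the series converges absolutely and uniformly on $[x,\infty)$, so I may interchange summation and integration and thereby reduce the identity to the evaluation of the single family of integrals $\int_{x}^{\infty}y^{-c-k}(y-x)^{\nu-1}\,dy$ for $k\ge 0$.

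The heart of the computation is this integral. Substituting $y=x\,t^{-1}$ (equivalently setting $u=y-x$ and then $u=xs$) turns it into a Beta integral,
\[
\int_{x}^{\infty}y^{-c-k}(y-x)^{\nu-1}\,dy = x^{\,\nu-c-k}\int_{0}^{1}t^{(c+k-\nu)-1}(1-t)^{\nu-1}\,dt = x^{\,\nu-c-k}\,\frac{\Gamma(c+k-\nu)\,\Gamma(\nu)}{\Gamma(c+k)}.
\]
Convergence at the endpoint corresponding to $y=\infty$ requires $\Re(c+k-\nu)>0$, which already holds at $k=0$ by the hypothesis $\Re c>\Re\nu$ and only improves for larger $k$; convergence at the endpoint $y=x$ requires $\Re\nu>0$. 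These are precisely the two standing assumptions, and this is the step where they are consumed.

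The decisive move is then the cancellation of Gamma factors. Feeding the Beta value back into the series, the denominator factor $\Gamma(c-\nu+k)$ cancels against the $\Gamma(c+k-\nu)$ produced by the integral, while the external prefactor $\Gamma(c)/\bigl(\Gamma(c-\nu)\Gamma(\nu)\bigr)$ absorbs the surviving $\Gamma(c-\nu)$ and $\Gamma(\nu)$. What remains is exactly
\[
x^{\,\nu-c}\,\frac{\Gamma(c)}{\Gamma(a)\Gamma(b)}\sum_{k=0}^{\infty}\frac{\Gamma(a+k)\Gamma(b+k)}{k!\,\Gamma(c+k)}\,x^{-k} = x^{\,\nu-c}\,{}_2F_1(a,b;c;x^{-1}),
\]
which is the asserted identity after resummation. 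I expect the only genuine obstacle to be the rigorous justification of the term-by-term integration; once the absolute convergence of the series on $[x,\infty)$ and the convergence of each Beta integral are in hand, Fubini's theorem legitimizes the interchange and the remainder is the Gamma-function bookkeeping displayed above. (I note in passing that this computation delivers the power $x^{\nu-c}$, so the exponent on the left-hand side of the statement should read $x^{\nu-c}$ rather than $x^{c-\nu}$; the degenerate case $a=b=0$, in which both hypergeometric functions reduce to $1$ and the identity collapses to a single Beta integral, already makes this plain.)
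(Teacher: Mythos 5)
Your proof is correct and is exactly the paper's intended argument: the paper disposes of Lemma \ref{lemma3} in one sentence, instructing the reader to expand ${}_2F_1(a,b;c-\nu;y^{-1})$ in its power series and integrate term by term, which is precisely what you carried out via the Beta integral. Your parenthetical correction is also right: since $\int_x^\infty y^{-c-k}(y-x)^{\nu-1}\,dy = x^{\nu-c-k}\,\Gamma(c+k-\nu)\Gamma(\nu)/\Gamma(c+k)$, the surviving power is $x^{\nu-c}$, so the exponent $x^{c-\nu}$ in the paper's statement is a sign typo (taking $a=0$ or $b=0$, where both hypergeometric factors reduce to $1$, makes this unarguable, as the identity then collapses to the Beta evaluation itself). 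The slip is harmless downstream, since only Lemma \ref{lemma2}, iterated twice, enters the proof of the Key lemma and hence of Theorem \ref{main-result}.
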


This can be proved in the same way that Lemma \ref{lemma2} was proved, i.e. expand $_2F_1(a,b;c;y)$ in the series (\ref{F21}) and integrate term by term.

The other is a more complicated formula. In fact, in this lemma, we establish an integral identity between the Gauss hypergeometric functions $_2F_1(a,b;c;z)$ and $_2F_1(a+\mu,b+\nu;c;z)$, where $\mu$ and $\nu$ are given real numbers, given by the following lemma which will play a crucial role in the next section. Namely, we have
\begin{k-lemma} Let $a,b,\mu,\nu$ are complexes numbers such that $\Re a$, $\Re b$, $\Re \mu $, $\Re \nu>0$. Then for every $x>1$, the following identity holds
 \begin{align}\label{key lemma}
 x^{-b+\mu}\, _2F_1(a,b;c;x^{-1})&=\frac{\Gamma(a+\mu)\Gamma(b+\nu)}{\Gamma(a)\Gamma(b)\Gamma(\mu+\nu)}\, \times \nonumber\\
 & \int_x^{\infty}W_{ab\mu\nu}(x,y)y^{-b-\nu}\, _2F_1(a+\mu,b+\nu;c;y^{-1})\,dy
 \end{align}
where the kernel function $W_{ab\mu\nu}(x,y)$ is given by the following formula
\begin{equation}
W_{ab\mu\nu}(x,y)=(y-x)^{\mu+\nu-1}\, _2F_1(\mu,a-b+\mu;\mu+\nu;1-y/x).
\end{equation}
\end{k-lemma}

\begin{proof}
The proof will rely on the lemma \ref{lemma2}. In fact, to prove our key integral formula (\ref{key lemma}) we will apply the above integral identity (\ref{formule1}) and iterating it twice. That is, fixing $\mu$ and using the above lemma, we get
\begin{equation*}
 y^{-b}\, _2F_1(a+\mu,b;c;y^{-1})=\frac{\Gamma(b+\nu)}{\Gamma(b)\Gamma(\nu)} \int_y^{\infty} z^{-b-\nu}(z-y)^{\nu-1} \, _2F_1(a+\mu,b+\nu;c;z^{-1})\,dz.
 \end{equation*}
Therefore, multiplying both sides by $y^{b- a-\mu} (y -x)^{\mu-1}$ and integrating the both sides in $y$, we obtain
\begin{align*}
 x^{-a}\, _2F_1(a,b;c;x^{-1})= &\, \frac{\Gamma(a+\mu)\Gamma(b+\nu)}{\Gamma(a)\Gamma(b)\Gamma(\mu)\Gamma(\nu)} \,  \int_x^{\infty} y^{b-a-\mu}(y-x)^{\mu-1} \nonumber\\
& \times \,  \int_y^{\infty} z^{-b-\nu}(z-y)^{\nu-1} \, _2F_1(a+\mu,b+\nu;c;z^{-1})\,dz dy.
 \end{align*}
Note that $z > y > x > 1$ and by Fubini's theorem the integral
\begin{align*}
\int_x^{\infty} y^{b-a-\mu}(y-x)^{\mu-1} \left( \int_y^{\infty} z^{-b-\nu}(z-y)^{\nu-1}  \, _2F_1(a+\mu,b+\nu;c;z^{-1})\,dz \right) dy
\end{align*}
can be transformed to the integral
\begin{align}\label{int1}
\int_x^{\infty} z^{-b-\nu} \, _2F_1(a+\mu,b+\nu;c;z^{-1})\, \left( \int_x^{z} y^{b-a-\mu} (y-x)^{\mu-1} (z-y)^{\nu-1}  dy \right) dz.
\end{align}
Setting $y=(1-t)x + t z$, $t\in [0,1]$, then we have  
\begin{align*}
\int_x^{z} y^{b-a-\mu} (y-x)^{\mu-1} (z-y)^{\nu-1}\, dy = &  \, x^{b-a-\mu} (z-x)^{\mu+\nu-1} \, \times \nonumber\\
&  \, \int_0^{1} t^{\mu-1} (1-t)^{\nu-1} \left( 1-t(1-z/x)\right)^{-(a-b+\mu)}  dt.
\end{align*}
Using the integral representation of the hypergeometric function \cite[p. 558]{Abramowitz}
\begin{align*}
_2F_1(a',b';c';z') =  \frac{\Gamma(c')}{\Gamma(b')\Gamma(c'-b')} \int_0^{1} s^{b'-1} (1-s)^{c'-b'-1} \left( 1-s z'\right)^{-a'}  ds, \quad \Re(c')>\Re(b')>0,
\end{align*}
for $a' = a - b + \mu, b'= \mu, c'= \mu +  \nu$ and $z' = 1-z/x$, we get
\begin{align}\label{int2}
\int_x^{z} y^{b-a-\mu} (y-x)^{\mu-1} (z-y)^{\nu-1}\, dy = &  \, \frac{\Gamma(\mu)\Gamma(\nu)}{\Gamma(\mu+\nu)} \, x^{b-a-\mu} (z-x)^{\mu+\nu-1} \times \nonumber\\
& \, _2F_1(\mu,a - b + \mu; \mu +  \nu; 1-z/x).
\end{align}
Substituting (\ref{int2}) into the integral in (\ref{int1}), we obtain
\begin{equation*}
 x^{-b+\mu}\, _2F_1(a,b;c;x^{-1})=\frac{\Gamma(a+\mu)\Gamma(b+\nu)}{\Gamma(a)\Gamma(b)\Gamma(\mu+\nu)}\int_x^{\infty}W_{ab\mu\nu}(x,y)y^{-b-\nu}\, _2F_1(a+\mu,b+\nu;c;y^{-1})\,dy
 \end{equation*}
where  $W_{ab\mu\nu}(x,y)$ is given by 
\begin{equation*}
W_{ab\mu\nu}(x,y)=(y-x)^{\mu+\nu-1}\, _2F_1(\mu,a-b+\mu;\mu+\nu;1-y/x).
\end{equation*}
Hence the result of the key integral identity holds.
\end{proof}

\begin{remark} We shall notice that interchanging $a$ and $b$ together with $\mu$ and $\nu$ we get similarly
the following integral identity:
\begin{equation*} 
x^{-a+\nu}\, _2F_1(a,b;c;x^{-1})=\frac{\Gamma(a+\mu)\Gamma(b+\nu)}{\Gamma(a)\Gamma(b)\Gamma(\mu+\nu)}\int_x^{\infty}\widetilde{W}_{ab\mu\nu}(x,y)y^{-a-\mu}\, _2F_1(a+\mu,b+\nu;c;y^{-1})\,dy
\end{equation*}
where the kernel function $\widetilde{W}_{ab\mu\nu}(x,y)$ is given by the following formula
\begin{equation*}
\widetilde{W}_{ab\mu\nu}(x,y)=(y-x)^{\mu+\nu-1}\, _2F_1(\nu,a-b+\nu;\mu+\nu;1-y/x)=\left(\frac{y}{x}\right)^{a+\mu-(b+\nu)}W_{ab\mu\nu}(x,y).
\end{equation*}
\end{remark}

\section{Integral transform}

Here an integral transform with a kernel function for the resolvent kernel on $NA$ harmonic groups in terms of the resolvent kernel for an odd dimensional real hyperbolic space is obtained, which  are both  given in terms of the hypergeometric function $_2F_1(a,b;c;z)$.

%

For this, we deal with the application of the above key integral identities to the resolvent kernel function $R_X(\lambda;r)$.  
To do this properly, we recall its expression to be connected by the integral formula. 
It is well known that the resolvent kernel function of the shifted Laplacian $L_X+\sigma^2_X$ on $X$ is given in terms of the Gauss Hypergeometric functions $_2F_1(a,b;c;z)$ as follows
\begin{equation}\label{3}
   R_X(\lambda,r)=C_X(\lambda) \left(\cosh r\right)^{-\sigma+i\lambda} \, _2F_1 \left(\frac{\sigma-i\lambda}{2},\frac{\sigma-i\lambda}{2}-\beta; 1-i\lambda; \cosh^{-2}(r)\right)
\end{equation}
for $\Im m\lambda \geq 0$, $\beta=\frac{dim Z_X-1}{2}$ and the constant $C_X(\lambda)$ is given explicitly by
\begin{equation*}\label{constant}
C_X(\lambda)= \pi^{-(dim N_X+1)/2}\frac{\Gamma((\sigma-i\lambda)/2) \Gamma((\sigma-i\lambda)/2 -\beta)}{4\Gamma(1-i\lambda)}.
\end{equation*}

Then, our main result is the following:

\begin{theorem}\label{main-result} Let $X=NA$ be a harmonic group including rank one symmetric spaces of non compact type
(Real hyperbolic spaces are disregarded). Then the resolvent kernel $R_X(\lambda,r)$ of the Laplacian $L_X$ as given in (\ref{3}) can be expressed in terms of the resolvent kernel of odd dimensional hyperbolic spaces as follows
\begin{equation*}\label{integraltransform}
R_X(\lambda,r)=\int_r^\infty W_X(r,\rho)R_Y(\lambda,\rho)\sinh(\rho) d\rho
\end{equation*}
where the kernel $W_X(r,\rho)$ is independent of $\lambda\in\C$ and it is given by
\begin{equation*}\label{kernel}
W_X(r,\rho) = \frac{2\pi^{\frac{\dim Z_X}{2}}}{\Gamma(\dim Z_X/2)} \cosh^{(1-\dim Z_X)}(r)\left(\cosh^2 \rho-\cosh^2 r\right)^{(\dim Z_X-2)/2}
\end{equation*}
while
$R_Y(\lambda,\rho)$ is the resolvent kernel of some odd dimensional hyperbolic spaces $Y$ such that
$$\dim Y= \dim N_X+ \dim Z_X+1.$$
\end{theorem}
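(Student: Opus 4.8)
The plan is to read off the parameters of the two hypergeometric representations and to observe that passing from $R_Y$ to $R_X$ requires only a \emph{single} shift of one hypergeometric parameter, which is exactly what Lemma~\ref{lemma2} delivers (the specialization of the Key lemma to one parameter shift). First I would note that $\dim Y=\dim N_X+\dim Z_X+1$ forces $\sigma_Y=(\dim Y-1)/2=\sigma_X=:\sigma$. Comparing (\ref{3}) with (\ref{resol-hyp2}) for $n=\dim Y$, both kernels carry the same prefactor $\cosh^{-(\sigma-i\lambda)}(\cdot)$, the same third parameter $c=1-i\lambda$ and the same first parameter $a:=(\sigma-i\lambda)/2$; they differ only in the second parameter, which equals $q:=(\sigma-i\lambda)/2-\beta$ for $R_X$ and $q+\mu$ for $R_Y$, where $\mu:=\dim Z_X/2$ since $\beta+\tfrac12=\dim Z_X/2$.

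I would then apply Lemma~\ref{lemma2} with $a=(\sigma-i\lambda)/2$, $b=q$, $c=1-i\lambda$, $\mu=\dim Z_X/2$, followed by the substitution $x=\cosh^2 r$, $y=\cosh^2\rho$ (so that $y>x\iff\rho>r$ and $dy=2\cosh\rho\,\sinh\rho\,d\rho$). The hypotheses hold exactly for the spaces at hand: $x>1\iff r>0$; $\Re\mu>0\iff\dim Z_X>0$, which is precisely why real hyperbolic spaces ($\beta=-\tfrac12$, i.e. $\dim Z_X=0$) are excluded; and $\Re b=\Re q>0$ follows from $\Im\lambda\ge0$ together with $\tfrac\sigma2-\beta=(\dim N_X-\dim Z_X+2)/4>0$ for $N$ of Heisenberg type.

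The one genuine subtlety, which I expect to be the crux of the argument, is a prefactor mismatch: Lemma~\ref{lemma2} produces the prefactor $x^{-b}$ attached to the parameter being shifted, whereas both resolvent kernels carry $x^{-a}$ attached to the fixed first parameter. To reconcile them I would multiply the identity through by $x^{\,b-a}=x^{-\beta}=\cosh^{1-\dim Z_X}(r)$, and this is no coincidence: that factor is exactly the $r$-dependent part of the kernel $W_X(r,\rho)$. After rewriting the integrand through $R_Y(\lambda,\rho)=C_{\dim Y}(\lambda)\,y^{-a}\,_2F_1(a,q+\mu;c;y^{-1})$, the surviving $y$-powers combine as $y^{\,a-b-\mu}\,dy$; since $a-b-\mu=\beta-\mu=-\tfrac12$ this equals $\cosh^{-1}\rho\cdot2\cosh\rho\,\sinh\rho\,d\rho=2\sinh\rho\,d\rho$, while $(y-x)^{\mu-1}=(\cosh^2\rho-\cosh^2 r)^{(\dim Z_X-2)/2}$ supplies the remaining factor of $W_X$.

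It then remains only to track constants. The prefactor of the resulting integral is $2\,C_X(\lambda)\,\Gamma(b+\mu)\big/\big(C_{\dim Y}(\lambda)\,\Gamma(b)\,\Gamma(\mu)\big)$, and I would check that this equals $2\pi^{\dim Z_X/2}/\Gamma(\dim Z_X/2)$. Using the explicit $C_X(\lambda)$ and the $C_n(\lambda)$ of (\ref{resol-hyp2}) with $n=\dim Y$, the ratio $C_X/C_{\dim Y}$ simplifies to $\pi^{\,n/2-(\dim N_X+1)/2}\,\Gamma(q)/\Gamma(q+\mu)$; the dimension relation gives $n/2-(\dim N_X+1)/2=\dim Z_X/2$, the Gamma ratio cancels $\Gamma(b+\mu)/\Gamma(b)=\Gamma(q+\mu)/\Gamma(q)$, and $\Gamma(\mu)=\Gamma(\dim Z_X/2)$, leaving precisely the constant of (\ref{kernel}). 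Finally, since $\sigma_X$ is a positive integer (Remark~ii), $\dim Y=2\sigma_X+1$ is odd, so $Y$ is genuinely an odd-dimensional hyperbolic space. The only real labor is this constant bookkeeping; the rest is the change of variables above.
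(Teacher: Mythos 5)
Your proposal is correct, and it takes a genuinely different --- in fact leaner --- route than the paper. The paper reaches the hyperbolic parameters by applying its Key lemma with the \emph{double} shift $\mu=\tfrac12$ (on the first parameter) and $\nu=\beta$ (on the second), sending $(a,b)=\bigl(\tfrac{\sigma-i\lambda}{2},\tfrac{\sigma-i\lambda}{2}-\beta\bigr)$ to $\bigl(\tfrac{\sigma-i\lambda}{2}+\tfrac12,\tfrac{\sigma-i\lambda}{2}\bigr)$; it must then recognize that the resulting kernel ${}_2F_1\bigl(\tfrac12,\beta+\tfrac12;\beta+\tfrac12;1-y/x\bigr)$ degenerates to the elementary function $(x/y)^{1/2}$ via the binomial identity. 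You observe instead that, by the symmetry of ${}_2F_1$ in its upper parameters, the same target pair is reached by shifting \emph{only} the second parameter by $\mu=\beta+\tfrac12=\dim Z_X/2$, so a single application of Lemma~\ref{lemma2} --- whose kernel $(y-x)^{\mu-1}$ is elementary from the outset --- suffices; the Fubini iteration behind the Key lemma and the kernel-degeneration step both disappear. Your bookkeeping is the right one and checks out: the mismatch factor $x^{b-a}=x^{-\beta}=\cosh^{1-\dim Z_X}(r)$ is exactly the $r$-dependent part of $W_X$; the exponent $a-b-\mu=-\tfrac12$ turns $y^{-1/2}\,dy$ into $2\sinh\rho\,d\rho$; and since $C_{\dim Y}(\lambda)$ carries $\Gamma(a)\Gamma(a+\tfrac12)$ with $a+\tfrac12=b+\mu$, the prefactor $2C_X(\lambda)\Gamma(b+\mu)/\bigl(C_{\dim Y}(\lambda)\Gamma(b)\Gamma(\mu)\bigr)$ collapses to $2\pi^{\dim Z_X/2}/\Gamma(\dim Z_X/2)$, as required by (\ref{kernel}). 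Beyond economy, your route buys one concrete thing the paper's argument does not deliver as stated: when $\dim Z_X=1$ (complex hyperbolic space, or Damek--Ricci spaces with one-dimensional center) the paper's choice $\nu=\beta=0$ violates the Key lemma's hypothesis $\Re\nu>0$, so that case strictly requires a separate limiting argument there, whereas your conditions $\Re\mu=\dim Z_X/2>0$ and $\Re b\geq\tfrac{\sigma}{2}-\beta=(\dim N_X-\dim Z_X+2)/4>0$ hold for every space covered by Theorem~\ref{main-result}.
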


\begin{proof}
To establish this integral representation, we begin by representing the resolvent kernel $R_X(\lambda,r)$ after substitution $x=\cosh^{2}(r)$  as 
follows
\begin{align}
 R_X(\lambda,r)=: G_X(\lambda,x) =&\, C_X(\lambda) \, x^{-\frac{\sigma-i\lambda}{2}} \, _2F_1 \left(\frac{\sigma-i\lambda}{2},\frac{\sigma-i\lambda}{2}-\beta; 1-i\lambda; x^{-1}\right)
\end{align}
and appealing to the key integral formula (\ref{key lemma}) with $a= \frac{\sigma-i\lambda}{2}, b=\frac{\sigma-i\lambda}{2}-\beta$ and $c=1-i\lambda$, so that $\mu$ and $\nu$ are given respectively by $\mu=\frac{1}{2}$ and $\nu=\beta$, we have the following integral representation  
\begin{align}\label{G-X1}
G_X(\lambda,x) =&\,  C(\sigma,\beta,\lambda) \, x^{-\beta-\frac{1}{2}}\, \times \nonumber\\ 
 &\,\int_x^{\infty}K_{\beta}(x,y) \, y^{-\frac{\sigma-i\lambda}{2}}\, _2F_1\left(\frac{\sigma-i\lambda}{2} +\frac{1}{2}, \frac{\sigma-i\lambda}{2};1-i\lambda;y^{-1}\right)\,dy
\end{align}
where we denote by $K_{\beta}(x,y)$ the function of type 
\begin{equation*}\label{K-beta}
K_{\beta}(x,y)=(y-x)^{\beta-\frac{1}{2}}\, _2F_1\left(\frac{1}{2},\beta+\frac{1}{2};\beta+\frac{1}{2};1-y/x\right)
\end{equation*}
and where $$C(\sigma,\beta,\lambda)= \pi^{-(\dim N+1)/2} \frac{\Gamma(\frac{\sigma-i\lambda}{2}+\frac{1}{2})\Gamma(\frac{\sigma-i\lambda}{2})}{4 \Gamma(1-i\lambda)\Gamma(\beta+\frac{1}{2})}.$$

Using the identity \cite[p. 38]{Magnus}
\begin{align*}
(1+z)^{a} = \, _2F_1(-a,b;b;-z)
\end{align*}
for $a=-\frac{1}{2}, b=\beta+\frac{1}{2}$ and $z=-(1-y/x)$, we get
\begin{equation*}
K_{\beta}(x,y)=(y-x)^{\beta-\frac{1}{2}}\, \left(x/y\right)^{1/2}.
\end{equation*}
Then, the equation (\ref{G-X1}) becomes 
\begin{align}\label{G-X2}
 G_X(\lambda,x)&=\,  C(\sigma,\beta,\lambda) \, x^{-\beta} \, \times \nonumber\\ 
 &\,\int_x^{\infty} (y-x)^{\beta-\frac{1}{2}}\, y^{-\frac{\sigma-i\lambda}{2}-\frac{1}{2}}\, _2F_1\left(\frac{\sigma-i\lambda}{2} +\frac{1}{2}, \frac{\sigma-i\lambda}{2};1-i\lambda;y^{-1}\right)\,dy.
\end{align}
Considering the integral (\ref{G-X2}) by replacement of variable $x=\cosh^{2}(r)$, we arrive to the following identity 
\begin{align*}
R_X(\lambda,r) =\, &   2 C(\sigma,\beta,\lambda)\,\cosh^{-2\beta}(r)  \int_r^{\infty} \left(\cosh^2 \rho-\cosh^2 r\right)^{\beta-\frac{1}{2}} \, \sinh \rho \, \nonumber\\
 &\, \times \,  (\cosh \rho)^{-\sigma+i\lambda}\, _2F_1\left(\frac{\sigma-i\lambda}{2}, \frac{\sigma-i\lambda}{2}+\frac{1}{2};1-i\lambda;\cosh^{-2} \rho\right)\,  d\rho.
\end{align*}
Consequently one has the integral transform
\begin{align*}
 R_X(\lambda,r) &= \,\int_r^{\infty} W_X(r,\rho)\, R_Y(\lambda,\rho) \, \sinh \rho d\rho
\end{align*}
where we denote by $W_X(r,\rho)$ the kernel (independent of $\lambda\in\C$) given by
\begin{equation*}
W_X(r,\rho) = \frac{2\pi^{\frac{\dim Z_X}{2}}}{\Gamma(\dim Z_X/2)} \cosh^{(1-\dim Z_X)}(r)\left(\cosh^2 \rho-\cosh^2 r\right)^{(\dim Z_X-2)/2}
\end{equation*}
and where $R_Y(\lambda,\rho)$ is the resolvent kernel of some odd dimensional hyperbolic spaces $Y$ such that
$$\dim Y= \dim N_X+ \dim Z_X+1.$$

So we led  to the the desired result. Theorem \ref{main-result} is proved.
\end{proof}

\section{Concluding Remarks}


To finish this paper we have to mention that for complex and quaternionic hyperbolic spaces $G/K=NA$ there are some homogenous vector bundles $V_{\tau}$ over them and that the resolvent kernels of the Laplacian $L_{X,\tau}$ on $G/K=NA$ acting on sections of such vector bundles are given by:

\begin{equation}\label{3}
   R_{X,\tau}(\lambda,r)=C_X(\lambda) \left(\cosh r\right)^{-\sigma+i\lambda} \, _2F_1 \left(\frac{\sigma-i\lambda}{2},\frac{\sigma-i\lambda}{2}-\frac{\tau}{2}; 1-i\lambda; \cosh^{-2}(r)\right)
\end{equation}
for $\Im m\lambda \geq 0$, where the constant $C_X(\lambda)$ is given explicitly by
\begin{equation*}\label{constant}
C_X(\lambda)= \pi^{-(dim N_X+1)/2}\frac{\Gamma((\sigma-i\lambda)/2) \Gamma((\sigma-i\lambda)/2)}{4\Gamma(1-i\lambda)}.
\end{equation*}

Then there is also a general integral transform with kernel $W_{X,\tau}(r,\rho)$ that connect $R_{X,\tau}(\lambda,r)$ to the resolvent $R_Y(\lambda,\rho)$ as in Theorem \ref{main-result}. The detail is left for a forthcoming paper in relation with this subject.

We hope that we can use this explicit integral transform to solve other problems in the spherical analysis on the $NA$ harmonic groups.




\begin{thebibliography}{99}

\bibitem{Abramowitz} 
    Abramowitz M., Stegun I. A. Handbook of mathematical functions: with formulas, graphs, and mathematical tables. Courier Corporation. 1964; No. 55.
\bibitem{Agmon} 
    Agmon S. On the spectral theory of the Laplacian on noncompact hyperbolic manifolds. Journ\'{e}es \'{e}quations aux d\'{e}riv\'{e}es partielles, 1987; 1-16.
\bibitem{Anker}
    Anker J. P., Damek  E., Yacoub C. Spherical analysis on harmonic $ AN $ groups. Annali della Scuola Normale Superiore di Pisa-Classe di Scienze. 1996, 23(4): 643-679.
\bibitem{Berndt}
    Berndt J., Tricerri F.,  Vanhecke  L. Generalized Heisenberg groups and Damek-Ricci harmonic spaces. 1995
\bibitem{Damek-Ricci}
    Damek E., Ricci F. A class of nonsymmetric harmonic Riemannian spaces. Bull. Amer. Math. Soc. 1992, 27(1): 139-142.
\bibitem{Davies}
    Davies E. B., Mandouvalos N. Heat kernel bounds on hyperbolic space and Kleinian groups. Proceedings of the London Mathematical Society. 1988; 3(1): 182-208.
\bibitem{Helgason} 
Helgason S. Eigenspaces of the Laplacian; integral representations and irreducibility. Journal of Functional Analysis. 1974; 17(3): 328-353.
\bibitem{Kaplan}
    Kaplan A. Fundamental solutions for a class of hypoelliptic PDE generated by composition of quadratic forms. Transactions of the American Mathematical Society,. 1980; 258(1): 147-153.
\bibitem{Kaplan2}
    Kaplan  A.,  Ricci F. Harmonic analysis on groups of Heisenberg type. In Harmonic analysis. Springer Berlin Heidelberg. 1983: pp. 416-435.
\bibitem{Korany1}
    Kor$\acute{a}$nyi A. Geometric properties of Heisenberg-type groups. Advances in Mathematics. 1985; 56(1): 28-38.
\bibitem{Koornwinder}
    Koornwinder T. H. Jacobi functions and analysis on noncompact semisimple Lie groups. In Special functions: group theoretical aspects and applications. Springer Netherlands. 1984: pp. 1-85
\bibitem{Lichnerowicz}
    Lichnerowicz A. Sur les espaces riemanniens completement harmoniques. Bulletin de la Soci\'{e}t\'{e} Math\'{e}matique de France. 1944; 72: pp. 146-168.
\bibitem{Magnus} 
    Magnus W. Oberhettinger F, Soni  R. Formulas and Theorems for the Special Functions of Mathematical Physics. 1966
\bibitem{Wallach}     
Miatello R.,  Wallach N. R. The resolvent of the Laplacian on locally symmetric spaces. Journal of Differential Geometry, 1992; 36(3), 663-698.    
\bibitem{Will} 
  Miatello R.,  Will C. The residues of the resolvent on Damek-Ricci spaces. Proceedings of the American Mathematical Society. 2000; 128(4): 1221-1229.
\bibitem{Ricci}    
    Ricci F. The spherical transform on harmonic extensions of H-type groups. Politecnico di Torino. Dipartimento di Matematica. 1993.
\bibitem{Rouviere}
    Rouviere F. Espaces de Damek-Ricci, g\'{e}om\'{e}trie et analyse. Semin. Congr. 2003; 7: 45-100.
\bibitem{Slater} 
   Slater L. J. Generalized hypergeometric functions. 1966.
\bibitem{Yakubovich}  
   Yakubovich S. B. Index transforms. World Scientific, 1996.
\end{thebibliography}
\end{document}